\title{Sharp Concentration of Hitting Size for Random Set Systems}
\author{Jessie Deering, Anant Godbole, and William Jamieson\\
Department of Mathematics and Statistics\\ East Tennessee State University \and
Lucia Petito\\
Department of Biostatistics, University of California, Berkeley}
\begin{document}
	\maketitle
\def\l{\mathbb L}
\def\la{\lambda}
\def\lr{\left(}
\def\rr{\right)}
\def\lc{\left\{}
\def\rc{\right\}}
\def\vp{\varphi}
\def\ca{\mathcal A}
\def\cp{\mathcal P}
\def\b{\beta}
\def\a{\alpha}
\def\e{\mathbb E}
\def\p{\mathbb P}
\def\v{\mathbb V}
\def\lg{{\rm lg}}
\def\nm{{n\choose m}}
\def\mr{{m\choose r}}
\def\nmr{{{n-m}\choose{m-r}}}
\newtheorem{thm}{Theorem}
\newtheorem{lm}[thm]{Lemma}
\newtheorem{cor}[thm]{Corollary}
\newtheorem{rem}[thm]{Remark}
\newtheorem{exam}[thm]{Example}
\newtheorem{prop}[thm]{Proposition}
\newtheorem{defn}[thm]{Definition}
\newtheorem{cm}[thm]{Claim}

		\begin{abstract}
			Consider the random set system ${\ca }=R(n,p)$ of $[n]:=\{1,2, \dots n\}$, where ${\ca }=\{A_j:A_j \in {\cal P}([n]),\text{ and $A_j$ selected with probability $p=p_n$}\}$. A set $H \subseteq [n]$ is said to be a hitting set for ${\ca}$ if $\forall A_j \in {\ca}$ $\vert A_j \cap H\vert \geq 1$. The second moment method is used to exhibit the sharp concentration of the minimal size of $H$ for a variety of values of $p$. 
		\end{abstract}
		
	\section{Introduction and Motivation}  A set $D$ of vertices in a graph $G=(V,E)$ forms a {\it dominating set} of $G$ if each $v\in V$ is either in $D$ or adjacent to some $d\in D$.  The domination number $\gamma=\gamma(G)$ is the size of the smallest dominating set of $G$.  Given a graph of minimum degree $\delta$, it is proved, e.g., in Alon and Spencer \cite{as} that \begin{equation}\gamma(G)\le\frac{1+\ln(\delta+1)}{\delta+1}.\end{equation}
In a result of direct relevence to this paper, Weber \cite{w} proved in 1981 that the domination number of the random graph $G(n,p)$ is sharply concentrated w.h.p.~if $p$ is fixed.
This result was extended in \cite{wg} to the case $p=p_n\to0$, where a two point concentration was shown to hold for $\gamma(G(n,p_n))$ provided $p_n$ did not decay too rapidly; specifically, $p=1/\log\log n$ works in the above result; $p=1/\log n$ does not.

Given a $k$-uniform hypergraph $H=(V,E_k)$, a {\it transversal} is a collection $T$ of vertices such that each edge $e\in E_k$ intersects $T$ in at least one vertex.  We will denote the transversal number of $H$ by $\tau(H)$.
A transversal is also called a {\it hitting set}, particularly in the Computer Science literature, where it is more typical than not for edges to be of different sizes.  Accordingly, we will reserve the terminology ``transversal" for $k$-uniform hypergraphs, and ``hitting set" for the general case. In a result that echoes  (1), Alon \cite{a} proved that for a 
$k$-uniform hypergraph with $v$ vertices and $e$ edges, 
\[\tau(H)\le(1+o(1))\frac{\log k}{k}(v+e)\enspace(k\to\infty).\]    The Computer Science literature has focused more on complexity issues for hitting sets; see, e.g., \cite{ckmv}, \cite{bhs}, \cite{lj}, and \cite{ers}.  The connection between {\it total domination} and transversals has been explored in \cite{ty}.

If all our edges are of cardinality two, i.e. if we have a graph, let $s\not\in T$, $T$ a transversal.  Then the only edges containing $s$, must be between $s$ and $t$ for $t\in T$.  Thus $T$ is a minimal hitting set iff $T^C$ is maximal independent.  Note that this is also true for arbitrary hypergraphs if independent sets are defined as collections of vertices for which there is no edge that is a subset of these vertices.  Also, the sharp  two point concentration of the maximal independent set in a random graph has been well understood since the early work of Bollob\'as and Erd\H os \cite{be} and Matula \cite {m}, and others.  
In these results on finite point concentration, nothing more than the second moment method was used, though more sophisticated machinery was employed by Alon and Krivelevich \cite{ak}, and Achlioptas and Naor \cite {an} to show the sharp concentration of the chromatic number of $G(n,p)$.  It will turn out that elementary methods will suffice in this paper; we will investigate the sharp concentration of the size of minimal hitting sets (or {\it hitting number}) for non-uniform hypergraphs.   

Our model consists of picking each set $A\subseteq\{1,2,\ldots,n\}$ with probability $p=p_n$.  
Let ${\ca}$ be the ensemble of picked sets, which we will call a {\it random set system} and denote by $R(n,p)$ (to mirror the $G(n,p)$ notation for a random graph).  The goal is to discover a class of $p$s for which the hitting number is close to the intuitive guess of $\lg(p\cdot2^n)$, where throughout this paper $\lg = \log_2$.  In Section 2, we set the stage for when a one or two point concentration holds for the hitting number, and, in Sections 3 and 4, details are provided for two canonical cases, namely those corresponding to $p=1/2^{n\b}$ and $p=n^\a/2^n$.

\section{Setting up the Two-Point Concentration}    Define the baseline random variable, $X_m$, to be the number of hitting sets of size $m$.  We start by exploring a lower bound on $\vert H\vert$.  
Clearly $$\e(X_m)={n\choose m}(1-p)^{2^{n-m}}\le{n\choose m}\exp\{-p2^{n-m}\},$$ since a set of size $m$ is hitting iff we do not pick any of the subsets of its complement to be in the random set system (actually we cannot by definition hit the empty set, so the correct exponent ought to be $2^{n-m}-1$). 
Let us set (with hindsight) $m=\lg(p\cdot2^n)-\vp(n)$.\footnote{In this paper we will encounter several functions that play a ``generic" role.  Examples of these functions are $\omega(n)$, $\vp(n),\epsilon_n$, and $\mu_n$.  They are each defined differently  in various parts of the paper, but their {\it role} is always the same, e.g. $\vp(n)$ will {\it always} denote how much smaller the hitting set size is than $\lg p\cdot 2^n$ and $\omega(n)$ will always be a function that tends to infinity at an arbitrarily slow rate.}
Thus \begin{equation}\p(X_m\ge 1)\le\e(X_m)\le{n\choose m}\exp\{-2^{\vp(n)}\}\to0\end{equation}
provided that ${n\choose m}\ll\exp\{2^{\vp(n)}\}$, 
and, using the inequality $1-p\ge e^{-p/(1-p)}$, with $\epsilon_n=\lg (1-p)^{-1}=O(p)$,
\begin{equation}\e(X_m)={n\choose m}(1-p)^{2^{n-m}}\ge{n\choose m}\exp\{-p2^{n-m+\varepsilon_n}\}\to\infty\end{equation} if ${n\choose m}\gg\exp\{2^{\vp(n)+\epsilon_n}\},$  where the $\vp$ functions in (2) and (3) are different.  Since zero-one probability thresholds often occur precisely where the associated expected value transitions from zero to infinity,
we anticipate that Equations (2) and (3) occur with near-consecutive values of $m$.

By Chebychev's inequality, $\p(X_m=0)\le\frac{\v(X_m)}{\e^2(X_m)}$, so to establish an upper bound on $\vert H\vert$ it would suffice to show that the variance is an order of magnitude smaller than the square of the mean whenever $m\ge m_0$ -- for some $m_0$ to be determined.  Since
$X_m=\sum_{j=1}^\nm I_j$, where the indicator variable $I_j$ equals one iff the $j$th $m$-set hits $R(n,p)$, we have that

\begin{eqnarray*}\v(X_m)&=&\e(X_m^2)-\e^2(X_m)\\
&=&\sum_{j=1}^{{n\choose m}}\e(I_j^2)-\lr\sum_{j=1}^{{n\choose m}}\e(I_j)\rr^2+\sum_{j\ne k}\e(I_jI_k)\\
&=&\e(X_m)-\e^2(X_m)+\sum_{j\ne k}\e(I_jI_k),\end{eqnarray*}
so that
\begin{equation}
\frac{\v(X_m)}{\la^2}=\frac{1}{\la}-1+\frac{\sum_{j\ne k}\e(I_jI_k)}{\la^2},
\end{equation}
where $\la=\la_m=\e(X_m)$.  Now two sets $A,B$ of size $m$ that intersect in $r$ elements both hit $\ca$ iff we do not pick, as part of $\ca$, any set that is a subset of $A^C$ or a subset of $B^C$; there are $2^{n-m}+2^{n-m}-2^{n-2m+r}$ of these.  Thus, substituting $s=m-r$ and assuming that $\la\ge1$, we have
\begin{eqnarray}\sum_{j\ne k}\e(I_jI_k)&=&\nm^2\cdot\sum_{r=0}^{m-1}\frac{\mr\nmr}{\nm}(1-p)^{2^{n-m+1}-2^{n-2m+r}}\nonumber\\
&=&\la^2\sum_{s=1}^{m}\la^{-2^{-s}}{m\choose s}{{n-m}\choose{s}}{n\choose m}^{2^{-s}-1},\nonumber\\
&\le&\la^2\sum_{s=1}^{m}{m\choose s}{{n-m}\choose{s}}{n\choose m}^{2^{-s}-1}.
\end{eqnarray}
By (4) and (5) it thus suffices to show that 
\begin{equation}
\sum_{s\ge1}{m\choose s}{{n-m}\choose{s}}{n\choose m}^{2^{-s}-1}=1+o(1)
\end{equation}
as $\la\to\infty$; this is really a simple statement about the function $m=m(n)$ as $n\to\infty$.  Let us set up what it takes to make (6) occur:  We first define, with $s_0=2(\lg(m\log n))$, the sums 
$$\Sigma_1=\sum_{s\ge s_0}{m\choose s}{{n-m}\choose{s}}{n\choose m}^{2^{-s}-1}$$ and $$\Sigma_2=\sum_{1\le s\le s_0-1}{m\choose s}{{n-m}\choose{s}}{n\choose m}^{2^{-s}-1}.$$
In $\Sigma_1$, we first bound as follows:
\[{{n}\choose{m}}^{2^{-s}}\le\lr\frac{ne}{m}\rr^{m/2^s}\le\lr\frac{ne}{m}\rr^{\frac{1}{m(\log n)^2}}=1+o(1),\]
so that
$$\Sigma_1\le (1+o(1))\sum_{s\ge s_0+1}\frac{{m\choose {m-s}}{{n-m}\choose{s}}}{{n\choose m}}=1+o(1),$$
since the sum above represents almost entirely the mass of a hypergeometric variable with mean $\sim m$, provided that $s_0\ll m$ -- which holds if $m\ge\Omega(\log\log n)$.  Turning to $\Sigma_2$, 
we have
\begin{eqnarray}
\Sigma_2&\le&\sum_{1\le s\le s_0-1}{m\choose s}{{n-m}\choose s}{n\choose m}^{-1/2}\nonumber\\
&\le& \sum_{1\le s\le s_0-1} n^{2s}(n/m)^{-m/2}\nonumber\\
&\le& n^{2s_0-m/2}m^{m/2}\nonumber\\
&=&e^{(2s_0-m/2)\log n+(m/2)\log m}.
\end{eqnarray}
(where we used the bounds $\max\{{m\choose s},{{n-m}\choose s}\}\le n^s$; ${n\choose m}\ge(n/m)^m$ in the second display above.)  We wish the estimate in (7) to be of magnitude $o(1)$ and thus need
\begin{equation}\log m<\lr1-\frac{8\lg(m\log n)}{m}\rr\log n.
\end{equation}
It is not too hard to check that (8) holds if $m$ is not too small or too large; specifically one needs
\begin{equation}
\Omega(\log\log n)\le m\le n-\Omega(\log n). 
\end{equation}
So, for $m$s satisfying (9), we get that the hitting size is at least $m+1$ if $\la=\la_m\to0$, while if $\la=\la_m\to\infty$, then the hitting size is at most $m$.  We next note that
\[\la^2_{m+1}={{n}\choose{m+1}}^2(1-p)^{2^{n-m}}={{n}\choose{m+1}}^2{n\choose m}^{-1}\la_m\gg\la_m,\]
certainly for all $m\in[1,n-3]$.  This leads 
to the conclusion that either $\la_m\to0$ or $\la_{m+1}\to\infty$.  If both these hold, then $|H|=m+1$ w.h.p.; on the other hand if $\la_{m-1}\to0; \la_m\to K; \la_{m+1}\to\infty$, or $\la_{m}\to0; \la_{m+1}\to K; \la_{m+2}\to\infty$ for some $K\in{\mathbb R}^+$, then we have a two point concentration.  We summarize the findings of this section in the following result:
\begin{thm}  Consider the random set system $\ca=R(n,p)$, where $p$ is unspecified.  Let ${\cal F}$ denote the interval $[\Omega(\log\log n),n-\Omega(\log n)]$, where the constants in the $\Omega$ functions can be readily specified.  Let $\ell=\sup\{m=m_n:\lim\e(X_m)=0\}$ and $h=\inf\{m:\lim\e(X_m)=\infty\}$.  Then, for suitable $p=p_n, \ell,h\in{\cal F}$; $h-\ell\in\{1,2\}$ and $\vert H\vert=\ell+1$ or $\vert H\vert=h$ w.h.p.
\end{thm}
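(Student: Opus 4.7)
The plan is to assemble the moment calculations of Section 2 into the two claims $\vert H\vert\ge\ell+1$ w.h.p.\ and $\vert H\vert\le h$ w.h.p., and then combine them with the gap estimate $\la_{m+1}^2/\la_m\to\infty$ to force $h-\ell\in\{1,2\}$.

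For the lower bound, I would use the monotonicity observation that any superset of a hitting set is itself a hitting set, which yields the inclusion $\{\vert H\vert\le m\}\subseteq\{X_m\ge 1\}$. Applying the first-moment bound (2) at $m=\ell$ then gives $\p(\vert H\vert\le\ell)\le\e(X_\ell)\to 0$ by the very definition of $\ell$. For the upper bound, I would apply Chebyshev's inequality $\p(X_h=0)\le\v(X_h)/\la_h^2$. The decomposition (4) writes this ratio as $\la_h^{-1}-1$ plus the interaction sum bounded in (5)--(6); the splitting $\Sigma_1+\Sigma_2$ verifies that this interaction sum equals $1+o(1)$ whenever $h\in\mathcal{F}$, since $\Sigma_1$ captures the bulk of a hypergeometric mass with mean $\sim m$ (using $s_0\ll m$, which holds as soon as $m\ge\Omega(\log\log n)$), and $\Sigma_2$ is majorized by the expression in (7), which is $o(1)$ under the range condition (9) via (8). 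Since $\la_h\to\infty$, we conclude $\p(X_h=0)\to 0$ and hence $\vert H\vert\le h$ w.h.p.

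For the gap estimate, the identity $\la_{m+1}^2=\nm^{-1}{n\choose m+1}^2\la_m$, valid for $m\in[1,n-3]$, gives $\la_{m+1}^2/\la_m\to\infty$ polynomially fast in $n$. Starting from $\la_\ell\to 0$, we therefore have two possibilities: either $\la_{\ell+1}\to\infty$ already, in which case $h=\ell+1$ and we obtain the one-point concentration $\vert H\vert=\ell+1$; or $\la_{\ell+1}$ tends to a finite positive limit $K$, in which case $\la_{\ell+2}^2\ge K\cdot\omega(n)\to\infty$, so $h=\ell+2$ and we get two-point concentration with $\vert H\vert\in\{\ell+1,h\}$. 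A direct check, using the heuristic $m\approx\lg(p\cdot 2^n)$ obtained by equating the exponent in (2) with zero, confirms that $\ell,h\in\mathcal{F}$ for the ``suitable'' $p=p_n$ treated in Sections 3 and 4.

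The main obstacle is really the uniform control of the interaction sum (6) across the relevant range of $m$; everything else is essentially bookkeeping around the monotonicity of hitting sets and the sharp gap between $\la_m$ and $\la_{m+1}$. Since the estimates (5)--(9) are already in hand, the remaining work for this theorem amounts to reading off their consequences and handling the one-versus-two case split that distinguishes the two conclusions $\vert H\vert=\ell+1$ and $\vert H\vert=h$.
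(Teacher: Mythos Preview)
Your proposal is correct and follows essentially the same approach as the paper: the theorem is stated as a summary of the Section~2 computations, and you have assembled precisely those ingredients---the first-moment bound (2) at $m=\ell$, the Chebyshev/second-moment bound via (4)--(9) at $m=h$, and the gap identity $\la_{m+1}^2={n\choose m+1}^2{n\choose m}^{-1}\la_m$---into the required lower/upper bounds and the $h-\ell\in\{1,2\}$ case split. The only addition you make explicit that the paper leaves implicit is the monotonicity observation $\{\vert H\vert\le m\}\subseteq\{X_m\ge1\}$, which is harmless and indeed clarifying.
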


It remains to solve for $m$ in terms of $p$.  In the next two sections, we consider the ``dense" case, where the hitting size is comparable to $n$ and the ``sparse" case, where we will seek to hit a system $\ca$ of size satisfying $\vert\ca\vert^{1/n}\to1$.  For specificity we use the values $p=1/2^{n\b}; 0<\b<1$ and $p=n^\a/2^n;\a>0$ respectively, even though other choices could have been made, with the analysis being quite similar.  In both sections, we seek to find a value of $m=m(p)$ for which $\e(X_m)\to 0$ and $\e(X_{m+1})\to\infty$ (or $\e(X_{m+2})\to\infty$).  
\section{A Dense Case, $p=1/2^{n\b}, 0<\b<1$.}
With $p=1/2^{n\b}$ and $m=(1-\b)n-\vp(n)$, where we restrict $\vp(n)\le\lg n$, $$\e(X_m)\le{n\choose m}\exp\{-2^{\vp(n)}\}\le{{n}\choose{\b n}}\lr\frac{1-\b}{\b}\rr^{\vp(n)}\exp\{-2^{\vp(n)}\}.$$  Stirling's formula next yields
\begin{eqnarray*}\e(X_m)&\le&\frac{C}{\sqrt n}(\b^{-\b}(1-\b)^{-(1-\b)})^n\lr\frac{1-\b}{\b}\rr^{\vp(n)}\exp\{-2^{\vp(n)}\}\\&\le&\frac{C}{\sqrt n}\gamma^{\lg n}\delta^n\exp\{-2^{\vp(n)}\},\end{eqnarray*} where $C$ is a universal constant, $\gamma=\max\{1,\frac{1-\b}{\b}\}$, and $\delta:=\b^{-\b}(1-\b)^{\b-1}\le2$.  We thus see that
$$\p(X_m\ge1)\le\e(X_m)\to0$$ if $$2^{\vp(n)}=n\ln\delta-\frac{1}{2}\ln n+(\ln\gamma)(\lg n)+\ln\omega(n)+\ln C,$$ or if
\begin{eqnarray*}\vp(n)&=&\lg\lr n\ln\delta-\frac{1}{2}\ln n+(\ln\gamma)(\lg n)+\ln\omega(n)\rr\\
&=&\lg(n\ln\delta)+o(1).\end{eqnarray*}  This yields
\begin{equation}\vert H\vert\ge\lfloor(1-\b)n-\lg( n\ln\delta)-o(1)\rfloor+1,\end{equation} where in (10), $\vp(n)\asymp\lg (n\ln\delta)\le \lg n$ as stipulated.

For the lower bound, we argue as follows:
$$\e(X_m)={n\choose m}(1-p)^{2^{n-m}}\ge{n\choose m}\exp\{-p2^{n-m+\varepsilon_n}\},$$ where $\epsilon_n=\lg (1-p)^{-1}=O(p)$.
Setting $m=(1-\b)n-\vp(n)$ (we are in search of a different $\vp(n)$ than in (10)) yields
\[\e(X_m)\ge{{n}\choose {\b n}}\exp\{-p2^{n-m+\varepsilon_n}\}\] 
if $\b<1/2$, and
$$\e(X_m)\ge{{n}\choose {\b n}}\lr\frac{1-\b}{2\b}\rr^{\lg n}\exp\{-p2^{n-m+\varepsilon_n}\}$$
if $\b\ge1/2$.
Simplifying as before we get $\e(X_m)\to\infty$ if
\begin{eqnarray*}\vp(n)&=&\lg\lr n\ln\delta-\frac{1}{2}\ln n+({\ln\eta})(\lg n)-\ln\omega(n)\rr{-\varepsilon_n}\\
&=&\lg(n\ln\delta)-o^*(1),\end{eqnarray*}where $\eta=\min\{\frac{1-\b}{2\b},1\}$,
and thus
\begin{equation}
\vert H\vert\le\lceil(1-\b)n-\lg( n\ln\delta)+o^*(1)\rceil.
\end{equation}
It is easy to verify that the $\vp(n)$ functions in (10) and (11) differ by $o(1)$.  Thus the worst case scenario is when these quantities  straddle an integer, when we have a two point concentration.  In the other case, we have that $\vert H\vert$ is a constant w.h.p.

We have proved \begin{thm}
Let $H=H(n,\b)$ be the size of the minimal hitting set of the random set system $R(n,1/2^{n\b})$ consisting of the ensemble that is generated when each set in $\cp([n])$ is independently picked with probability $p=2^{-n\b}$.  Then with probability approaching unity, $\vert H\vert = h$ or $h+1$, where 
\[h=\lfloor(1-\b)n-\lg(n\ln\delta)-o(1)\rfloor+1,\]
and where the $o(1)$ is as in the argument leading to (10).
\end{thm}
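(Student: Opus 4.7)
The plan is to specialize Theorem 1 to $p=1/2^{n\b}$ by pinpointing the integer $m^*$ at which $\la_m:=\e(X_m)$ transitions from $o(1)$ to $\omega(1)$. The natural parameterization is $m=(1-\b)n-\vp(n)$: it makes $p\cdot 2^{n-m}=2^{\vp(n)}$, so that the basic estimate $(1-p)^{2^{n-m}}\approx\exp\{-p\cdot 2^{n-m}\}=\exp\{-2^{\vp(n)}\}$ takes a tractable form in which all dependence on $n$ and $\b$ is funneled through the single unknown $\vp$.

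First, for the lower bound on $|H|$, I would start from $\e(X_m)\le\binom{n}{m}\exp\{-2^{\vp(n)}\}$ and expand $\binom{n}{\b n+\vp(n)}$ via Stirling's formula. With $\delta=\b^{-\b}(1-\b)^{\b-1}$, the leading behavior is $\Theta(n^{-1/2})\delta^n\gamma^{\vp(n)}$ for some $\gamma=\gamma(\b)$ capturing the perturbation from $\b n$. Requiring the product to be $o(1)$ and taking $\ln$, the binomial contributes $n\ln\delta+O(\log n)$ while the exponent contributes $-2^{\vp(n)}$; balancing these gives $\vp(n)=\lg(n\ln\delta)+o(1)$ and hence the lower bound $|H|\ge\lfloor(1-\b)n-\lg(n\ln\delta)-o(1)\rfloor+1$.

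Next, for the matching upper bound, I would apply the elementary inequality $1-p\ge e^{-p/(1-p)}$ to obtain $\e(X_m)\ge\binom{n}{m}\exp\{-p\cdot 2^{n-m+\epsilon_n}\}$, where $\epsilon_n=\lg(1-p)^{-1}=O(p)$ is negligible. An entirely parallel Stirling analysis forces $\la_m\to\infty$ as soon as $\vp(n)=\lg(n\ln\delta)-o^*(1)$. The two $\vp$ functions produced by the two bounds differ only by $o(1)$, so after passing to integer parts the thresholds $\ell$ and $h$ supplied by Theorem 1 satisfy $h-\ell\in\{1,2\}$. Combining the bounds, and checking that $m=(1-\b)n-O(\log n)$ lies inside the admissible window $\mathcal F=[\Omega(\log\log n),n-\Omega(\log n)]$ of Theorem 1 (immediate for fixed $\b\in(0,1)$), yields the claimed two-point concentration $|H|\in\{h,h+1\}$ w.h.p.

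The main technical obstacle is the Stirling bookkeeping across the three sub-regimes $\b<1/2$, $\b=1/2$, and $\b>1/2$: the multiplicative factor arising from the $\vp$-shift about $\b n$ behaves qualitatively differently in each, producing the $\gamma$ that appears in the upper bound and the $\eta=\min\{(1-\b)/(2\b),1\}$ that appears in the lower bound. The saving grace is that every such factor is only $(\log n)^{O(1)}$ in size and therefore absorbs harmlessly into the $o(1)$ sitting inside the outer $\lg$; it does not shift the integer part of $(1-\b)n-\lg(n\ln\delta)$, which is the only information that survives in the final statement.
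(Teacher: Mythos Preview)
Your proposal is correct and follows essentially the same route as the paper: the same parameterization $m=(1-\b)n-\vp(n)$, the same Stirling expansion of $\binom{n}{m}$ about $\b n$ with leading term $\delta^n/\sqrt n$, the same two-sided estimates via $1-p\le e^{-p}$ and $1-p\ge e^{-p/(1-p)}$, and the same case split (producing the factors $\gamma=\max\{1,(1-\b)/\b\}$ and $\eta=\min\{(1-\b)/(2\b),1\}$) that gets absorbed into the $o(1)$ inside the outer $\lg$. The only cosmetic difference is that the paper bounds the perturbation factor by $\gamma^{\lg n}$ after imposing $\vp(n)\le\lg n$, whereas you carry it as $\gamma^{\vp(n)}$; either way it is polylogarithmic and irrelevant to the integer part.
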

The next result follows immediately:
\begin{cor}
Let $I=I(n,\b)$ be the size of the maximal independent set of the random set system $R(n,1/2^{n\b})$ Then w.h.p., $\vert I\vert = i$ or $i+1$, where 
\[i=n-2-\lfloor(1-\b)n-\lg( n\ln\delta)-o(1)\rfloor.\]
\end{cor}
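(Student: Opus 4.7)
The plan is a one-line reduction to Theorem 2 via the hitting-set / independent-set duality recorded in the Introduction. Recall that, for arbitrary hypergraphs, with independence defined as ``no edge is a subset of the vertex set,'' a set $T\subseteq[n]$ meets every $A\in\ca$ if and only if its complement $T^C$ contains no $A\in\ca$. Hence $T$ is a hitting set iff $T^C$ is independent, and since taking complements is size-reversing we get the deterministic identity
\[|I(n,\b)| \;=\; n - |H(n,\b)|\]
on every realization of $R(n,1/2^{n\b})$: any independent set strictly larger than $n-|H|$ would produce, by complementation, a hitting set strictly smaller than the minimum, a contradiction, and conversely.

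Given this identity, the corollary is just a transcription of Theorem 2. That theorem yields $|H|\in\{h,h+1\}$ w.h.p., with $h=\lfloor(1-\b)n-\lg(n\ln\delta)-o(1)\rfloor+1$. Subtracting from $n$ converts the two-point window to $|I|\in\{n-h-1,\,n-h\}$ w.h.p. Setting
\[i \;=\; n-h-1 \;=\; n-2-\lfloor(1-\b)n-\lg(n\ln\delta)-o(1)\rfloor\]
reproduces the stated expression for $i$, and the concentration window $\{n-h-1,n-h\}=\{i,i+1\}$ is exactly the claimed two-point concentration. There is essentially no new obstacle: the probabilistic content is fully absorbed into Theorem 2, and the only extra ingredient, the hitting/independent duality, has already been justified in the Introduction.
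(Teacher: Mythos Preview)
Your proof is correct and matches the paper's approach: the paper states the corollary ``follows immediately'' from Theorem~2, and your reduction via the hitting-set/independent-set complementation identity $|I|=n-|H|$ (already noted in the Introduction) together with the arithmetic $i=n-h-1$ is exactly that immediate deduction.
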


\section{A Sparse Case, $p=n^\a/2^n, \a>0$}  We now move on to the case $p=n^\a/2^n, \a>0$.  Notice that as in Theorem 2, the hitting number works out to be a {\it just a little smaller} than the value $\lg\e\vert\ca\vert=\lg (p\cdot 2^n)$, which can easily be seen to be the least $m$ such that the set $\{1,2,\ldots,m\}$ is {\it expected} to hit all the sets in $\ca$. 
\begin{thm} Let $H=H(n,\a)$ be the size of the minimal hitting set of the random set system $R(n,n^\a/2^{n})$, $\a>0$. Then with high probability, $\vert H\vert = h$ or $h+1$, where 
\[h=\lfloor\a\lg n-\lg(\a\lg n\ln n)-o(1)\rfloor+1.\]
\end{thm}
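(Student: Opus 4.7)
The plan is to apply Theorem 1 by locating the threshold in $m$ at which $\e(X_m)$ transitions from $0$ to $\infty$, paralleling the dense case treatment in Theorem 2. Set $p = n^\a/2^n$ and write $m = \a\lg n - \vp(n)$, anticipating $\vp(n) = O(\lg\lg n)$; then $m \sim \a\lg n$ lies well within $\mathcal{F} = [\Omega(\log\log n), n-\Omega(\log n)]$, so Theorem 1 reduces the task to pinning down $\vp(n)$ on each side.

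For the lower bound on $|H|$, inequality (2) gives $\e(X_m) \le {n\choose m}\exp\{-2^{\vp(n)}\}$, so I need $2^{\vp(n)}$ to dominate $\ln{n\choose m}$. Stirling yields $\ln{n\choose m} = m\ln(n/m) + m + O(\log m)$, and with $m = \a\lg n - O(\lg\lg n)$ one has $\ln(n/m) = \ln n - \ln(\a\lg n) + o(1) = (\ln n)(1+o(1))$, whence
\[ \ln{n\choose m} = \a(\lg n)(\ln n)(1+o(1)). \]
Hence $\e(X_m) \to 0$ precisely when $\vp(n) = \lg(\a\lg n\ln n) + o(1)$, which gives $|H| \ge \lfloor \a\lg n - \lg(\a\lg n\ln n) - o(1)\rfloor + 1 = h$.

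For the matching upper bound I use (3): $\e(X_m) \ge {n\choose m}\exp\{-2^{\vp(n)+\varepsilon_n}\}$ where $\varepsilon_n = O(p) = O(n^\a/2^n)$ is doubly exponentially small and so contributes only $o(1)$ to the exponent. The same Stirling calculation then shows $\e(X_m)\to\infty$ whenever $\vp(n) = \lg(\a\lg n\ln n) - o(1)$. The two $\vp$-thresholds thus agree to within $o(1)$, and Theorem 1 delivers the two-point concentration on $\{h, h+1\}$ (collapsing to a single point if the two $\vp$s fail to straddle an integer).

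The main obstacle is confirming that the Stirling correction is genuinely lower order \emph{inside the outer logarithm}, i.e.\ that $\lg\ln{n\choose m} = \lg(\a\lg n\ln n) + o(1)$ rather than merely $+O(1)$; this requires $\ln{n\choose m}/(\a\lg n\ln n) \to 1$, not just boundedness of the ratio. The margin is afforded by $\ln(n/m) = \ln n - \ln\ln n + O(1) = (\ln n)\bigl(1 - O(\ln\ln n/\ln n)\bigr)$, which upon taking $\lg$ yields exactly the $o(1)$ precision needed for a genuine two-point concentration rather than a looser $O(1)$-window result.
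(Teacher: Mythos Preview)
Your argument is correct and follows essentially the paper's approach: invoke the framework of Theorem~1, write $m=\a\lg n-\vp(n)$, and locate the threshold $\vp(n)=\lg(\a\lg n\ln n)+o(1)$ via Stirling estimates on $\ln\binom{n}{m}$. The one notable difference is in the upper-bound half: the paper fixes an explicit choice $\vp(n)=\lg[\a(1-\mu_n)\lg n\ln n]-\epsilon_n$ with $\mu_n=\frac{5+\a}{\a}\,\frac{\lg\lg n}{\lg n}$ and verifies $\e(X_m)\ge n^{\lg\lg n}$ through a chain of eight displayed inequalities, whereas you argue symmetrically from the single asymptotic $\ln\binom{n}{m}/(\a\lg n\ln n)\to1$, which handles both directions at once. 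Your route is shorter and more conceptual; the paper's is more explicit about which $o(1)$ actually works, which is what your final paragraph is (correctly) at pains to justify.
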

\begin{proof} Let $X_m$ be as before.  We have 
\[\e(X_m)\le{n\choose m}\exp\{-p2^{n-m}\}\le\lr\frac{ne}{m}\rr ^m\exp\{-p2^{n-m}\},\]
so, setting $p=n^\alpha/2^n$ and $m=\a\lg n-\vp(n)$ (where we restrict by seeking solutions with $\vp(n)\le3\lg(\lg n)$), we get
\[\e(X_m)\le\lr\frac{ne}{\a\lg n-3\lg\lg n}\rr^{\a\lg n}\exp\{-2^{\vp(n)}\}\to0\]
if
\[2^{\vp(n)}=\a\lg n\bigg(\ln n+1-\ln(\a\lg n-3\lg\lg n)\bigg)+\ln\omega(n)\]
or
\begin{eqnarray}\vp(n)&=&\lg\bigg(\a\lg n\big(\ln n+1-\ln(\a\lg n-3\lg\lg n)\big)+\ln\omega(n)\bigg)\nonumber\\
&=&\lg(\a\lg n\ln n)+o(1).\end{eqnarray}
Note that $\vp(n)\le3\lg\lg n$ in (12) if $n$ is sufficiently large.  Thus 
 \begin{equation}\vert H\vert\ge\lfloor\a\lg n-\lg(\a\lg n\ln n)-o(1)\rfloor+1.\end{equation}

Next, setting $\vp(n)=\lg[\a(1-\mu_n)\lg n\ln n]-\epsilon_n$, where $\mu_n=\frac{5+\a}{\a}\frac{\lg\lg n}{\lg n}$, and $m=\a\lg n-\vp(n)$, we see that
\begin{eqnarray*}
\e(X_m)&=&{n\choose m}(1-p)^{2^{n-m}}\\
&\ge&{n\choose m}\exp\{-2^{\vp(n)+\epsilon_n}\}\\
&=&\frac{{n\choose m}}{n^{\a(1-\mu_n)\lg n}}\\
&\ge&\frac{(n-m)^m}{C\sqrt{m}}\lr\frac{e}{m}\rr^m\frac{1}{n^{\a(1-\mu_n)\lg n}}\\
&\ge&\exp\{-m^2/(n-m)\}\frac{1}{C\sqrt{m}}\lr\frac{ne}{m}\rr^m\frac{1}{n^{\a(1-\mu_n)\lg n}}\\
&\ge&\frac{1}{2C\sqrt{m}}\lr\frac{ne}{\a\lg n}\rr^{\a\lg n-\vp(n)}\frac{1}{n^{\a(1-\mu_n)\lg n}}\\
&\ge&\frac{1}{2C\sqrt{m}}\frac{n^{\a\mu_n\lg n-\vp(n)}}{(\a\lg n)^{\a\lg n-\vp(n)}}\\
&\ge&\frac{1}{2C\sqrt{m}}\frac{n^{(5+\a)\lg\lg n-3\lg\lg n}}{(\a\lg n)^{\a\lg n-\vp(n)}}\\
&\ge&n^{\lg\lg n}\\
&\to&\infty.
\end{eqnarray*}
Together with (13), this completes the proof of Theorem 4.  
\end{proof}

\section{Open Questions} We feel that deriving similar concentrations for hitting set size of random uniform hypergraphs would be of value,  as would be results in which subsets of various sizes are picked with (a wide variety of) size-biased probabilities.  
\section{Acknowledgments}  The research of all the authors was supported by NSF Grant 1004624.  We thank the referee of a previous version of this paper, whose suggestions have greatly improved and streamlined the paper.

\end{document}